\newtheorem{theorem}{Theorem}[section]
\newtheorem{proposition}[theorem]{Proposition}
\newtheorem{corollary}[theorem]{Corollary}
\theoremstyle{definition}
\newtheorem{definition}[theorem]{Definition}
\newtheorem{example}[theorem]{Example}
\theoremstyle{remark}
\newtheorem{remark}[theorem]{Remark}
\numberwithin{equation}{section}
\newcommand{\R}{\mathbb{R}}
\begin{document}
\setcounter{page}{1}

\title[Harmonically $m$-convex set-valued function]{Harmonically $m$-convex set-valued function}

\author[Gabriel Santana, Maira Valera-L\'opez]{Gabriel Santana$^1$, Maira Valera-L\'opez$^{1*}$}
\address{$^{1}$ Escuela de Matem\'atica,  Facultad de Ciencias, Universidad Central de Venezuela, Caracas 1010, Venezuela.}
\email{\textcolor[rgb]{0.00,0.00,0.84}{gaszsantana@gmail.com}}
\email{\textcolor[rgb]{0.00,0.00,0.84}{maira.valera@ciens.ucv.ve}}

\subjclass[2020]{Primary 26D15; Secondary 26D99, 26A51, 39B62, 46N10.}
\keywords{set-valued convex functions, Harmonically convex functions, Harmonically $m$-convex functions, Hermite-Hadamard type inequality, convex analysis}

\date{\newline \indent $^{*}$ Corresponding author: Gabriel Santana}

\begin{abstract}
	This research aimed to introduce the concept of harmonically $m$-convex  set-valued functions, which is obtained from the combination of two definitions: harmonically $m$-convex functions and set-valued functions. In this work some properties and characteristics are developed, as well as a inequality of the Hermite-Hadamard type for such functions.
\end{abstract} \maketitle

\section{Introduction}

Convexity is a fundamental concept in mathematics and has interesting applications in other areas of knowledge. The study of this definition has increased exponentially at the beginning of this century obtaining important results in this area (see \cite{Merentes2013}). 
In recent studies, new concepts of convexity have been introduced from the original definition, one of them is harmonic convexity. These concepts constitute the basis for the development of our work. Anderson \textit{et. al.} \cite{Anderson2004} and I. Iscan \cite{Iscan2014} have considered harmonic convex functions which can be viewed as an important class of convex functions. One can show that harmonic convex functions have some nice properties, which convex functions have. Noor \textit{et. al.} \cite{Noor2015} introduced the class of harmonic $h$-convex functions with respect to an arbitrary nonnegative function $h(\cdot)$. This class is more general and contains several known and new classes of harmonic convex functions as special cases.

Several new convex sets have been introduced and investigated (see \cite{Aumann1965, Narvaes2011}).  However, the concept of $m$-convex set $D$ was introduced in 1984 by G. Toader \cite{Toader1984}  and go on to presenting the definition of $m$-convex set value function and stating some properties and examples. 

The notion of set-valued function arises at the beginning of the twentieth century, when C. Berge \cite{Berge1963} introduced the concept of upper and lower limit of succession sets. Recently different researchers have been studied for different notions of set-valued convexity functions, as well as have been used to find the error in some inclusion problems with set restrictions, (see \cite{Geletu2006, Leiva2013, Matkowski1998, Mejia2014, Nikodem1987, Nikodem1989, Nikodem2014, Sadowska1996, Santana2018} ). In particular, T. Lara \textit{et al.} \cite{Lara2019} introduced the notion of $m$-convex set-valued functions defined on a nonempty $m$-convex subset of a real linear space with values in the set of nonempty parts of an equality real linear space. In this research are given several characterizations as well as certain algebraic properties and examples. 

Various integral inequalities for convex functions and their variant forms are being developed using novel ideas and techniques. Some recent developments demonstrate results of Hermite-Hadamard inequality for harmonically convex set-valued functions. Santana \textit{et al.} \cite{Santana2018} introduced the definition of harmonically convex and strongly harmonically convex set-valued functions, obtaining important results such as Hermite-Hadamard and Fej\'er inequalities, as well as a Benstein-Doetsch-type theorem.

The aim of this research is to introduce a new concept of harmonically $m$-convex set-valued function, in turn, this research explores some characteristics and properties that exhibit these. Also, a new Hermite-Hadamard type inequality for harmonically $m$-convex set-valued functions is developed. The results obtained in this investigation are being studied for stochastic processes, having as reference the following works \cite{Barraez2015, Gonzalez2015, Gonzalez2016, Materano2015, Materano2016, Materano2017}.

\section{Preliminary}

As part of our research it is necessary to provide the reader with some preliminary definitions used throughout this investigation in order to lay the foundations for the development of this work.

\begin{definition}(see \cite{Iscan2016})
	Let $X$ a linear space and $m\in (0,1]$. A nonempty subset $D$ of $X$ is said to be harmonic $m$-convex, if for all $x,y\in D$ and $t\in [0,1]$, we have:
	
	$$\frac{mxy}{tmx+(1-t)y}\in D.$$ 
\end{definition}

In this case \.{I}. \.{I}scan in \cite{Iscan2016} generalized the harmonically convex function definition introduced in \cite{Iscan2014} to harmonically $(\alpha,m)$-convex function:

\begin{definition}(see \cite{Iscan2016})
	Let $f: D\subset (0,\infty) \rightarrow \R$ a function. Then,  $f$ is said to be harmonically $(\alpha,m)$-convex function if for all $\alpha, t\in [0,1]$, $m\in (0,1]$ and $x,y\in D$, we have:
	\begin{equation}\label{1.1}
	f\left(\frac{mxy}{tmx+(1-t)y}\right)\leq t^{\alpha}f(y)+m(1-t)^{\alpha}f(x).
	\end{equation}

	Note that if we considered $\alpha=1$ in (\ref{1.1}), $f$ is said to be a harmonically $m$-convex function and satisfies the following:
	
	$$f\left(\frac{mxy}{tmx+(1-t)y}\right)\leq tf(y)+m(1-t)f(x).$$
\end{definition}	

	For this kind of functions in \cite{Iscan2016} obtain the following result:

\begin{theorem}(see \cite{Iscan2016})
	Let $f :(0,\infty)\rightarrow \R$ be a harmonically $m$-convex function with
	$m\in (0,1]$. If $0 <a<b<\infty$ and $f\in L[a,b]$, then one has the inequality:
	$$\frac{ab}{b-a}\int_{a}^{b}\frac{f(x)}{x^{2}}dx\leq min\left\{\frac{f(a)+f\left\{\frac{b}{m}\right\}}{2},\frac{f(b)+f\left\{\frac{a}{m}\right\}}{2}\right\}.$$
\end{theorem}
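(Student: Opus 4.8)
The plan is to turn the weighted integral mean on the left into an ordinary average over $[0,1]$ of $f$ evaluated along a ``harmonic segment'', to insert the defining inequality of harmonic $m$-convexity pointwise, and then to integrate. For the first step I would use the substitution
\[
x=\frac{ab}{tb+(1-t)a},\qquad t\in[0,1],
\]
that is, $\frac1x=\frac ta+\frac{1-t}{b}$. This is a $C^{1}$ bijection of $[0,1]$ onto $[a,b]$ sending $t=0$ to $x=b$ and $t=1$ to $x=a$, and a short computation gives $\frac{dx}{x^{2}}=-\frac{b-a}{ab}\,dt$; the factor $x^{2}$ in the integrand is precisely what collapses the Jacobian to the constant $\frac{b-a}{ab}$. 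Since this constant is bounded away from $0$ and $\infty$, the hypothesis $f\in L[a,b]$ guarantees that $t\mapsto f\!\left(\frac{ab}{tb+(1-t)a}\right)$ is integrable on $[0,1]$, and, accounting for the reversal of orientation, the change of variables gives
\begin{equation*}
\frac{ab}{b-a}\int_{a}^{b}\frac{f(x)}{x^{2}}\,dx=\int_{0}^{1}f\!\left(\frac{ab}{tb+(1-t)a}\right)dt .
\end{equation*}

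Next one recognises the argument above as the expression appearing in the definition of harmonic $m$-convexity: with the choice $x=\frac bm$, $y=a$ one has $\frac{mxy}{tmx+(1-t)y}=\frac{ab}{tb+(1-t)a}$, so harmonic $m$-convexity in the case $\alpha=1$ gives
\begin{equation*}
f\!\left(\frac{ab}{tb+(1-t)a}\right)\le t\,f(a)+m(1-t)\,f\!\left(\frac bm\right),\qquad t\in[0,1].
\end{equation*}
Integrating over $[0,1]$ and using $\int_{0}^{1}t\,dt=\int_{0}^{1}(1-t)\,dt=\tfrac12$ bounds the left-hand side of the theorem by $\tfrac12\bigl(f(a)+m\,f(b/m)\bigr)$. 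The complementary choice $x=\frac am$, $y=b$, together with the mirror substitution $x=\frac{ab}{ta+(1-t)b}$ (i.e. $\frac1x=\frac tb+\frac{1-t}{a}$), turns the same left-hand side into $\int_{0}^{1}f\!\left(\frac{ab}{ta+(1-t)b}\right)dt$ and yields the bound $\tfrac12\bigl(f(b)+m\,f(a/m)\bigr)$. Taking the smaller of the two estimates completes the proof.

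I expect the only delicate part to be the bookkeeping just indicated, and in particular the correct assignment of parameters in the definition: $y$ must be the endpoint that is \emph{not} rescaled by $1/m$ (so $y=a$, $x=b/m$ in the first case and $y=b$, $x=a/m$ in the second). If one instead took, say, $x=a$ and $y=b$, the segment $t\mapsto\frac{mxy}{tmx+(1-t)y}$ would sweep $[ma,b]$ rather than $[a,b]$, the endpoints would no longer match the domain of integration, and the reduction would fail. Once the substitution and this choice are in place, the remainder is a routine termwise integration of affine functions of $t$, with no further convexity input required.
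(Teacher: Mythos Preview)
The paper does not itself prove this theorem; it is quoted from \.{I}\c{s}can (2016) as a preliminary result, with no proof given. The closest thing in the paper is the proof of the final theorem, the set-valued Hermite--Hadamard inclusion, and that argument is essentially the one you propose: apply the defining inequality with one endpoint replaced by its $1/m$-rescaling so that the harmonic combination $\frac{mxy}{tmx+(1-t)y}$ becomes exactly $\frac{ab}{ta+(1-t)b}$, integrate the resulting pointwise inequality over $t\in[0,1]$, and identify $\int_{0}^{1}f\bigl(\frac{ab}{ta+(1-t)b}\bigr)\,dt$ with $\frac{ab}{b-a}\int_{a}^{b}\frac{f(x)}{x^{2}}\,dx$ via the substitution you describe. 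Your choice of which endpoint to rescale (so that the harmonic segment sweeps $[a,b]$ rather than $[ma,b]$) matches the paper's, and your justification of integrability is more careful than what the paper provides.

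One remark worth making explicit: your integration produces the bounds $\tfrac12\bigl(f(a)+m\,f(b/m)\bigr)$ and $\tfrac12\bigl(f(b)+m\,f(a/m)\bigr)$, with a factor $m$ in front of the rescaled term. This is exactly what the paper's own set-valued analogue states, and it is the form that falls out of the computation; the scalar statement as transcribed in the preliminaries here has dropped that $m$. So what you have proved is the sharp version of the inequality, and the discrepancy with the displayed statement is a transcription artefact rather than a gap in your argument.
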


In the other hand, G. Santana \textit{et al.} \cite{Santana2018} in 2018,  introduced the definition of harmonically convex set-valued functions, extending the definition given by \.I. \.{I}scan for real functions (see \cite{Iscan2014}).

\begin{definition}(see \cite{Santana2018})
	Let $X$ and $Y$ linear spaces, $D$ a harmonically convex subset of $X$ and $F: D\subset X\rightarrow n(Y)$ a set-valued function. Then $F$ is said harmonically convex function if for all $x,y\in D$ and $t\in [0,1]$, we have:
	$$tF(y)+(1-t)F(x)\subseteq F\left(\frac{xy}{tx+(1-t)y}\right)$$  
\end{definition}

\begin{remark}
	Throughout this paper $n(Y)$ will denote the family of nonempty subsets of $Y$.
\end{remark}

For this kind of functions they obtain many results as algebraic properties, Hermite-Hadamard and Fejer type inequalities and Bernstein-Doetsch type result.\\

To prove certain algebraic properties of the results in this research, we use two definitions established by T. Lara \textit{et al.} in 2014 \cite{Lara2019}.

\begin{definition}(see \cite{Lara2019})
	Let $F_{1},F_{2}: D\subseteq X\rightarrow n(Y)$ be two set-valued functions (or multifunctions) then:
	\begin{itemize}
		\item The union of $F_{1}$ and $F_{2}$ is a set-valued function $F_{1}\cup F_{2}: D\rightarrow n(Y)$ given by $(F_{1}\cup F_{2})(x)= F_{1}(x)\cup F_{2}(x)$ for each $x\in D$.
		\item The sum of $F_{1}$ and  $F_{2}$ is the function $F_{1}+F_{2}: D\rightarrow n(Y)$ defined in its usual form $(F_{1}+F_{2})(x)= F_{1}(x)+F_{2}(x)$ for each $x\in D$.
	\end{itemize} 
\end{definition}

\begin{definition}(see \cite{Lara2019})
	Let $X,Y,Z$ be linear spaces and $D$ be a subset of  $X$. Then:
	\begin{itemize}
		\item If $F_{1}: D\rightarrow n(Y)$ and $F_{2}: D\rightarrow n(Z)$, then the cartesian product function
		of $F_{1}$ and $F_{2}$ is the set-valued function $F_{1}\times F_{2}: D\rightarrow n(Y)\times n(Z)$ given by $(F_{1}\times F_{2})(x)= F_{1}(x)\times F_{2}(x)$ for each $x\in D$.
		
		\item If $F_{1}: D\rightarrow n(Y)$ and $F_{2}: n(Y)\rightarrow n(Z)$, then the composition function of $F_{1}$ and $F_{2}$ is the set-valued function $F_{2}\circ F_{1}: D\rightarrow n(Z)$ given by 	$$\displaystyle{(F_{2}\circ F_{1})(x)= F_{2}(F_{1}(x))= \bigcup_{y\in F_{1}(x)}F_{2}(y)},$$
		for each $x\in D$. 
	\end{itemize}
\end{definition}

Following the idea establish in \cite{Santana2018}, in this paper we extend that definition and introduce a new concept of convexity, combining the definitions of harmonically $m$-convex functions and set-valued functions. Then we define the following:

\begin{definition}
	Let $X$ and $Y$ be linear spaces, $D$ a harmonically $m$-convex subset of $X$ and $F:D\subset X\rightarrow n(Y)$ a set-valued function. It said that $F$ is said harmonically $m$-convex function if for all $x,y\in D$, $t\in [0,1]$, and $m\in (0,1]$, we have:
	
	\begin{equation}\label{FCVAmC_3}
	tF(y)+m(1-t)F(x)\subseteq F\left(\frac{mxy}{tmx+(1-t)y}\right).
	\end{equation}
\end{definition}

We have some examples of this kind of function.

\begin{example}
	Let $f_{1}, (-f_{2}): [a,b]\subset \R \rightarrow \R$ be harmonically $m$-convex functions with $f_{1}(x)\leq f_{2}(x)$ for all $x\in [a,b]$. Then, the set-valued function $F:D\subset X\rightarrow n(Y)$ defined by $F(x)= [f_{1}(x),f_{2}(x)]$ is harmonically $m$-convex.  
\end{example}

\noindent In fact, since $f_{1}$ and $-f_{2}$ are harmonically $m$-convex functions, then for all $x,y\in D$, $t\in [0,1]$ and $m\in (0,1]$, we have:
$$f_{1}\left(\frac{mxy}{tmx+(1-t)y}\right)\leq tf_{1}(y)+m(1-t)f_{1}(x),$$
and
$$-f_{2}\left(\frac{mxy}{tmx+(1-t)y}\right)\leq -(tf_{2}(y)+m(1-t)f_{2}(x)),$$
if we multiply $(-1)$ to both sides of the last inequality, we have:
$$f_{2}\left(\frac{mxy}{tmx+(1-t)y}\right)\geq tf_{2}(y)+m(1-t)f_{2}(x),$$ 

\noindent Then, 
\begin{equation}\label{1.2}
	[tf_{1}(y)+m(1-t)f_{1}(x), tf_{2}(y)+m(1-t)f_{2}(x)]\subset \left[f_{1}\left(\frac{mxy}{tmx+(1-t)y}\right), f_{2}\left(\frac{mxy}{tmx+(1-t)y}\right)\right]
\end{equation}

\noindent from (\ref{1.2}) we obtain with a elementary calculus that
$$tF(y)+m(1-t)F(x)\subset F\left(\frac{mxy}{tmx+(1-t)y}\right).$$
\begin{example}
	Let $H\in \R^{3}$ a harmonically $m$-convex subset, $F: \R \rightarrow n(\R)$ a set-valued function defined by $F(x)=f(x)H$, where $f(x)=x^{2}$ is a harmonically $m$-convex function.
\end{example}
\noindent Then, since $f(x)=x^{2}$ is a harmonically $m$-convex function by definition, we have for all $x,y\in D$, $t\in[0,1]$ and $m\in (0,1]$ that

$$\left(\frac{mxy}{tmx+(1-t)y}\right)^{2}\leq t(y)^{2}+m(1-t)(x)^{2},$$
using the harmonically $m$-convex properties of $H$ we have to
$$(t(y)^{2}+m(1-t)(x)^{2})H\subseteq \left(\frac{mxy}{tmx+(1-t)y}\right)^{2}H,$$
then,
\begin{eqnarray*}
  (t(y)^{2}+m(1-t)(x)^{2})H &=& t(y)^{2}H+m(1-t)(x)^{2}H\\
  &=& tF(y)+m(1-t)F(x)\\
  &\subseteq& F\left(\frac{mxy}{tmx+(1-t)y}\right).
\end{eqnarray*} 
Thus $F$ is a harmonically $m$-convex set-valued function.

\section{Main results}

The results obtained in this paper are based on the developments and ideas of \.I. Iscan \cite{Iscan2016} and T. Lara \textit{et al.} in \cite{Lara2019}. The following proposition establishes a property over harmonic $m$-convex set.

\begin{proposition}
	Let harmonic $m$-convex ($m\neq 1$) subset $D$ of $X$ is said to be \textit{starshaped} if, for all $x$ in $D$ and all $t$ in the interval $(0, 1]$, the point $tx$ also belongs to $D$. That is:
	$$tD\subseteq D.$$ 
\end{proposition}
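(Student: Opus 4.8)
The plan is to reduce the statement to a single coefficient computation followed by an iteration (factorization) argument, exactly as one does to show that ordinary $m$-convex sets in the sense of Toader are starshaped.

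First I would fix an arbitrary $x\in D$ and apply the defining inclusion of harmonic $m$-convexity (Definition~2.1) with $y=x$: for every $t\in[0,1]$,
$$\frac{mx^{2}}{tmx+(1-t)x}=\frac{mx}{tm+(1-t)}=\frac{m}{1-t(1-m)}\,x\in D.$$
Set $\varphi(t)=\dfrac{m}{1-t(1-m)}$. Since $m\neq 1$, i.e. $m\in(0,1)$, the denominator $1-t(1-m)$ is positive and strictly decreasing on $[0,1]$, so $\varphi$ is continuous and strictly increasing with $\varphi(0)=m$ and $\varphi(1)=1$; hence $\varphi([0,1])=[m,1]$. This gives the first key fact: $sx\in D$ for every $s\in[m,1]$ and every $x\in D$, that is, $sD\subseteq D$ for all $s\in[m,1]$.

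Second, I would upgrade the admissible dilation factors from $[m,1]$ to all of $(0,1]$ by iterating. Given $t\in(0,1]$, choose $n\in\mathbb{N}$ large enough that $m^{n}\le t$; this is possible because $m\in(0,1)$ forces $m^{n}\to 0$. Then $t^{1/n}\in[m,1]$: indeed $t\le 1$ gives $t^{1/n}\le 1$, while $t\ge m^{n}$ gives $t^{1/n}\ge m$. Applying the first key fact $n$ consecutive times with factor $s=t^{1/n}$ yields
$$x\in D\ \Longrightarrow\ t^{1/n}x\in D\ \Longrightarrow\ t^{2/n}x\in D\ \Longrightarrow\ \cdots\ \Longrightarrow\ t^{n/n}x=tx\in D.$$
As $x\in D$ and $t\in(0,1]$ were arbitrary, $tD\subseteq D$, which is the claimed starshapedness.

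I expect the only genuinely non-trivial point to be the realization that one use of the definition is not enough, since it produces dilation factors only in $[m,1]$; the multiplicative factorization $t=(t^{1/n})^{n}$ with each factor lying in $[m,1]$ is the device that closes the gap. Everything else is the elementary analysis of $\varphi$ above. It is also worth remarking explicitly why the hypothesis $m\neq 1$ is needed: if $m=1$ then $\varphi\equiv 1$, the iteration recovers only $x\in D$, and no starshapedness follows — so the restriction $m\neq 1$ is essential.
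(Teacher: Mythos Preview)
Your proof is correct and follows essentially the same strategy as the paper: first use the defining inclusion with $y=x$ to obtain $sD\subseteq D$ for all $s\in[m,1]$, then iterate to reach every $t\in(0,1]$. The paper organizes the iteration by writing $(0,1]=\bigcup_{n\ge 1}[m^{n},m^{n-1}]$ and applying the first step to $m^{k}x$ successively, rather than factoring $t=(t^{1/n})^{n}$, but this is a cosmetic difference; your explicit analysis of $\varphi$ is in fact more detailed than the paper's bare assertion of the first step.
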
 

\begin{proof}
Let $D$ be a harmonically $m$-convex subset of $X$. If $D$ is an empty set, there is nothing to prove. If, on the contrary, we consider $D$ a nonempty set, let $x\in D$ then the point $x= \frac{mab}{tma+(1-t)b}\in D$ for everything $a,b\in D$ and $t\in[0,1]$.
Thus, $[m,1]x=\{rx: m\leq r \leq 1\}\subset D$, in particular $mx\in D$. If $m=0$, then $[0,1]x\in D$, we got the desired result.

In the case $m>0$, we similarly repeat the previous argument for $mx$ (instead of $x$), in this case we have to $[m^2,m]x=[m,1]mx\subseteq D$.

Inductively, we have that $[m^n,m^{n-1}]x\subseteq D$ for all $n\in \mathbb{N}$. Therefore $(0,1]x= \bigcup_{n=1}^{\infty}[m^n, m^{n-1}]x\subseteq D$. Thus $D$ satisfies $tD\subseteq D$ for $t\in (0,1]$.
\end{proof}


\noindent 	For harmonically $m$-convex set-valued functions  we obtain the following results:

\begin{proposition}
	Let  $F_{1},F_{2}:D\rightarrow n(Y)$ be harmonically $m$-convex set-valued functions with $F_{1}(x)\subseteq F_{2}(x)$ (or $F_{2}(x)\subseteq F_{1}(x)$) for each $x\in D$. Then the union of $F_{1}$ and $F_{2}$ ($F_{1}\cup F_{2}$) is a harmonically $m$-convex set-valued function.
\end{proposition}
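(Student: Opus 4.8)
The plan is to verify the defining inclusion \eqref{FCVAmC_3} directly for the set-valued function $(F_1\cup F_2)(x)=F_1(x)\cup F_2(x)$, using the hypothesis that the values of $F_1$ and $F_2$ are nested pointwise. Without loss of generality I would assume $F_1(x)\subseteq F_2(x)$ for every $x\in D$; under this assumption $(F_1\cup F_2)(x)=F_2(x)$ for each $x$, so the whole statement collapses to the fact that $F_2$ itself is harmonically $m$-convex, which is a hypothesis. (The case $F_2(x)\subseteq F_1(x)$ is symmetric and reduces to $F_1$ being harmonically $m$-convex.)

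The key steps, in order, are: (1) fix $x,y\in D$, $t\in[0,1]$, $m\in(0,1]$, and recall that $D$ is harmonic $m$-convex so the point $\frac{mxy}{tmx+(1-t)y}$ lies in $D$ and $(F_1\cup F_2)$ is well-defined there; (2) observe that pointwise nestedness $F_1(z)\subseteq F_2(z)$ gives $(F_1\cup F_2)(z)=F_2(z)$ for all $z\in D$, in particular at $z=x$, $z=y$, and $z=\frac{mxy}{tmx+(1-t)y}$; (3) substitute these equalities into the left- and right-hand sides of \eqref{FCVAmC_3}, reducing the required inclusion
$$t(F_1\cup F_2)(y)+m(1-t)(F_1\cup F_2)(x)\subseteq (F_1\cup F_2)\left(\frac{mxy}{tmx+(1-t)y}\right)$$
to
$$tF_2(y)+m(1-t)F_2(x)\subseteq F_2\left(\frac{mxy}{tmx+(1-t)y}\right),$$
which holds by the harmonic $m$-convexity of $F_2$; (4) write the one-line symmetric argument for the alternative hypothesis $F_2(x)\subseteq F_1(x)$.

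I do not anticipate a genuine obstacle here: the only mild subtlety is making sure the reduction $(F_1\cup F_2)=F_2$ is applied at all three relevant points simultaneously (the two endpoints and the harmonic-type interior point), and checking that the Minkowski sum and scalar multiples on the left-hand side are literally unchanged under replacing $F_1\cup F_2$ by $F_2$. Once that bookkeeping is done, the result is immediate. If one wanted a proof that does not pass through the collapse $(F_1\cup F_2)=F_2$ — for instance to keep the argument parallel to later propositions about $F_1+F_2$ or $F_1\times F_2$ — one could instead use monotonicity of Minkowski addition and of union: from $tF_i(y)+m(1-t)F_i(x)\subseteq F_i\!\left(\frac{mxy}{tmx+(1-t)y}\right)$ for $i=1,2$ one takes unions on both sides and uses $A_1\cup A_2 + B_1\cup B_2 \supseteq (A_i+B_i)$ together with the nestedness to identify $(A_1\cup A_2)+(B_1\cup B_2)$ with the larger summand; this is the step I would present most carefully, but it is still routine.
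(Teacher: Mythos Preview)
Your proposal is correct and matches the paper's own proof essentially line for line: assume without loss of generality $F_1(x)\subseteq F_2(x)$, collapse $(F_1\cup F_2)(z)$ to $F_2(z)$ at each relevant point, and invoke the harmonic $m$-convexity of $F_2$. The alternative ``monotonicity'' route you sketch at the end is not used in the paper, but your main argument is the same as theirs.
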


\begin{proof} Let $F_{1},F_{2}$ be harmonically $m$-convex set-valued function with $x,y\in D$, $t\in [0,1]$ and $m\in (0,1]$. Let's assume $F_{1}(x)\subseteq F_{2}(x)$ (in the case $F_{2}(x)\subseteq F_{1}(x)$ is analogous) for each $x\in D$, then:

\begin{eqnarray*}
	t(F_{1}\cup F_{2})(y)&+&m(1-t)(F_{1}\cup F_{2})(x) \\
	&&= t(F_{1}(y)\cup F_{2}(y))+ m(1-t)(F_{1}(x)\cup F_{2}(x)\\
	&&= tF_{2}(y)+m(1-t)F_{2}(x)\\
	&&\subseteq F_{2}\left(\frac{mxy}{tmx+(1-t)y}\right)\\
	&&= F_{1}\left(\frac{mxy}{tmx+(1-t)y}\right)\cup F_{2}\left(\frac{mxy}{tmx+(1-t)y}\right)\\
	&&= (F_{1}\cup F_{2})\left(\frac{mxy}{tmx+(1-t)y}\right).
\end{eqnarray*} 
\end{proof}

\begin{proposition}\label{Prop_2_8}
	If $F: D\subset X \rightarrow n(Y)$ is a harmonically $m$-convex set-valued function, then the image of $F$ of any harmonically $m$-convex subset of $D$ is a $m$-convex set of $Y$.
\end{proposition}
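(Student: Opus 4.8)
The plan is to argue directly from the definitions. Let $K$ be any harmonically $m$-convex subset of $D$ and write its image as $F(K)=\bigcup_{z\in K}F(z)\subseteq Y$; note that $F(K)\neq\emptyset$ since $K$ is nonempty and every value of $F$ lies in $n(Y)$. I will show that $F(K)$ satisfies the defining condition of an $m$-convex set, namely that $tu+m(1-t)v\in F(K)$ for all $u,v\in F(K)$, all $t\in[0,1]$, and the same $m\in(0,1]$ for which $F$ is harmonically $m$-convex.

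First I would fix $u,v\in F(K)$. By the definition of the image of a set-valued function there are points $y,x\in K$ with $u\in F(y)$ and $v\in F(x)$, the labelling being chosen so that $u$ plays the role of the $F(y)$-term and $v$ the role of the $F(x)$-term in (\ref{FCVAmC_3}). Then for every $t\in[0,1]$ one has $tu+m(1-t)v\in tF(y)+m(1-t)F(x)$, so the harmonic $m$-convexity of $F$ yields
$$tu+m(1-t)v\in tF(y)+m(1-t)F(x)\subseteq F\left(\frac{mxy}{tmx+(1-t)y}\right).$$

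Next I would invoke that $K$ is itself harmonically $m$-convex: since $x,y\in K$ and $t\in[0,1]$, the point $\frac{mxy}{tmx+(1-t)y}$ belongs to $K$, whence $F\left(\frac{mxy}{tmx+(1-t)y}\right)\subseteq F(K)$. Chaining this with the previous inclusion gives $tu+m(1-t)v\in F(K)$, which is precisely the $m$-convexity of $F(K)$, completing the argument.

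I do not expect a genuine obstacle here; the proof is a short chase through two inclusions. The only points needing care are bookkeeping ones: matching the placement of the coefficients $t$ and $m(1-t)$ in (\ref{FCVAmC_3}) with the convention adopted for $m$-convex sets, keeping the parameter $m$ fixed throughout, and recalling that the image of a multifunction is the union of its values, so that each of $u$ and $v$ arises from a single point of $K$ rather than from a combination of points.
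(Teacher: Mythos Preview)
Your proof is correct and follows essentially the same line as the paper's: pick two points in the image, pull them back to values $F(y)$ and $F(x)$ with $x,y$ in the harmonically $m$-convex subset, apply the defining inclusion (\ref{FCVAmC_3}), and use harmonic $m$-convexity of the subset to conclude that the harmonic mean lies in it. Your write-up is in fact a bit more careful about nonemptiness and about matching the placement of the coefficients, but the argument is the same.
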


\begin{proof}
Let $A$ be a harmonically $m$-convex subset of $D\subset X$ and $a,b\in F(A)=\cup_{z\in A}F(z)$. Then $a\in F(x)$ and $b\in F(y)$ for some $x,y \in A$. Thus, for all $t\in [0,1]$ and $m\in (0,1]$, we have to:

$$tb+m(1-t)a\in tF(y)+m(1-t)F(x)\subseteq F\left(\frac{mxy}{tmx+(1-t)y}\right).$$

Since $A$ is harmonically $m$-convex set, we have $\frac{mxy}{tmx+(1-t)y}\in A$ and
$tb+m(1-t)a\in F(A)$ for all $t \in [0,1]$. Which implies that $F(A)$ is $m$-convex set of $Y$.
\end{proof}

\begin{corollary}
	If $F: D\subseteq X\rightarrow n(Y)$ is a harmonically $m$-convex set-valued function, then the range of $F$ is $m$-convex set of $Y$.	 
\end{corollary}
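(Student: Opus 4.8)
The plan is to deduce this statement directly from Proposition~\ref{Prop_2_8} by taking the harmonically $m$-convex subset in question to be the whole domain $D$. First I would recall that, by the very definition of a harmonically $m$-convex set-valued function, the domain $D$ is required to be a harmonically $m$-convex subset of $X$; in particular $D$ is a harmonically $m$-convex subset of $D$, so it is an admissible choice for the set $A$ appearing in Proposition~\ref{Prop_2_8}.

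Next I would identify the range of $F$ with the image $F(D) = \bigcup_{x \in D} F(x)$. Applying Proposition~\ref{Prop_2_8} with $A = D$ then yields immediately that $F(D)$ is an $m$-convex subset of $Y$, which is exactly the asserted conclusion.

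There is essentially no obstacle here: the corollary is the special case $A = D$ of the preceding proposition, and the only hypothesis to verify — that the entire domain counts as a harmonically $m$-convex subset of itself — is part of the standing assumption on $F$. If a self-contained argument is preferred, one can simply repeat the short reasoning of Proposition~\ref{Prop_2_8}: given $a \in F(x)$ and $b \in F(y)$ with $x,y \in D$, the defining inclusion \eqref{FCVAmC_3} gives $tb + m(1-t)a \in tF(y) + m(1-t)F(x) \subseteq F\!\left(\frac{mxy}{tmx+(1-t)y}\right) \subseteq F(D)$ for all $t \in [0,1]$ and $m \in (0,1]$, while $\frac{mxy}{tmx+(1-t)y} \in D$ because $D$ is harmonically $m$-convex; hence $F(D)$ is $m$-convex.
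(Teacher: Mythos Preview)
Your proposal is correct and follows exactly the paper's own approach: apply Proposition~\ref{Prop_2_8} with $A=D$, noting that $D$ is harmonically $m$-convex by hypothesis and that the range of $F$ equals $F(D)$. The optional self-contained argument you include is simply the proof of Proposition~\ref{Prop_2_8} specialized to $A=D$, so nothing further is needed.
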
	 

\begin{proof}	
If we consider $A=D$ in Proposition \ref{Prop_2_8} we get
that $Rang(F)=F(D)$.
\end{proof}

\begin{proposition}
	A set-valued function $F:D\rightarrow n(Y)$ is harmonically $m$-convex, if and only if,
	\begin{equation}\label{FCVAmC_4}
	tF(B)+m(1-t)F(A)\subseteq F\left(\frac{mAB}{tmA+(1-t)B}\right),
	\end{equation} 
	for each $A,B\subseteq D$, $t\in [0,1]$ and $m\in (0,1]$.
\end{proposition}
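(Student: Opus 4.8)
The plan is to prove the two implications separately, with the "if" direction being essentially immediate and the "only if" direction requiring us to unpack the set-sum notation. For the \emph{backward} implication, suppose \eqref{FCVAmC_4} holds for all subsets $A,B\subseteq D$. Then in particular, given any points $x,y\in D$, apply the hypothesis with the singletons $A=\{x\}$ and $B=\{y\}$; since $F(\{x\})=F(x)$ and $F(\{y\})=F(y)$ and $\frac{mAB}{tmA+(1-t)B}$ reduces to the single point $\frac{mxy}{tmx+(1-t)y}$, inclusion \eqref{FCVAmC_4} collapses exactly to \eqref{FCVAmC_3}. Hence $F$ is harmonically $m$-convex.

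For the \emph{forward} implication, assume $F$ satisfies \eqref{FCVAmC_3}. Fix $A,B\subseteq D$, $t\in[0,1]$, $m\in(0,1]$. The key step is to take an arbitrary element of the left-hand set $tF(B)+m(1-t)F(A)$ and show it lies in the right-hand set. Such an element has the form $tb+m(1-t)a$ with $b\in F(y)$ for some $y\in B$ and $a\in F(x)$ for some $x\in A$ (recalling that $F(B)=\bigcup_{y\in B}F(y)$, and similarly for $F(A)$). By \eqref{FCVAmC_3} applied at the points $x,y\in D$, we have $tb+m(1-t)a\in tF(y)+m(1-t)F(x)\subseteq F\!\left(\frac{mxy}{tmx+(1-t)y}\right)$. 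Since $\frac{mxy}{tmx+(1-t)y}$ is a point of the set $\frac{mAB}{tmA+(1-t)B}=\bigcup_{x\in A,\,y\in B}\frac{mxy}{tmx+(1-t)y}$, monotonicity of $F$ with respect to set inclusion gives $F\!\left(\frac{mxy}{tmx+(1-t)y}\right)\subseteq F\!\left(\frac{mAB}{tmA+(1-t)B}\right)$. Combining the two inclusions yields $tb+m(1-t)a\in F\!\left(\frac{mAB}{tmA+(1-t)B}\right)$, and since the element was arbitrary, \eqref{FCVAmC_4} follows.

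The only real subtlety — and the step I would be most careful about — is the bookkeeping of the set-valued arithmetic: one must read $F(A)$ as the union $\bigcup_{z\in A}F(z)$, interpret $\frac{mAB}{tmA+(1-t)B}$ as the union of the corresponding pointwise expressions, and verify that $F$ evaluated on a subset of its domain is monotone under inclusion (which is immediate, again because $F$ of a set is defined as the union of $F$ of its points). No genuine obstacle arises; the statement is a routine "singletons $\Leftrightarrow$ arbitrary sets" upgrade, analogous to the corresponding fact for harmonically convex set-valued functions in \cite{Santana2018}. I would present it in three or four lines of display math, one chain for each direction.
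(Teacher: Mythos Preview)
Your proposal is correct and follows essentially the same route as the paper: the backward implication is obtained by specializing to singletons $A=\{x\}$, $B=\{y\}$, and the forward implication is proved by picking an arbitrary element of $tF(B)+m(1-t)F(A)$, applying \eqref{FCVAmC_3} at the witnessing points, and then using that $F$ of a set is the union of $F$ of its points to pass to $F\!\left(\frac{mAB}{tmA+(1-t)B}\right)$. The only cosmetic difference is notation (the paper calls the generic element $x$ and the witnesses $a\in A$, $b\in B$, while you call the witnesses $x,y$ and the element $tb+m(1-t)a$).
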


\begin{proof}
$(\Rightarrow)$ 
Let $A,B$ be arbitrary subsets of $D$, $t\in [0,1]$ and $m\in (0,1]$.
Let $x\in t(F(B)=\cup_{b\in B}F(b))+m(1-t)F((A)=\cup_{a\in A}F(a)$, that is to say $x\in tF(b)+m(1-t)F(a)$ for some $a\in A$ and $b\in B$. Since $F$ is harmonically $m$-convex and $a,b\in D$, it follows that:

$$tF(b)+m(1-t)F(a)\subseteq F\left(\frac{mab}{tma+(1-t)b}\right),$$

\noindent moreover, $\frac{mab}{tma+(1-t)b}\in \frac{mAB}{tmA+(1-t)B}$ and, in consequence,

$$F\left(\frac{mab}{tma+(1-t)b}\right)\subset F\left(\frac{mAB}{tmA+(1-t)B}\right).$$ 

\noindent Therefore, $x\in F\left(\frac{mAB}{tmA+(1-t)B}\right)$.

$(\Leftarrow)$
For $x,y\in D$ $t\in [0,1]$ and $m\in (0,1]$, the result is obtained by replacing $A=\{x\}$
and $B=\{y\}$ in (\ref{FCVAmC_4}) and we obtain the desired result
$$tF(\{y\})+m(1-t)F(\{x\})\subseteq F\left(\frac{m\{x\}\{y\}}{tm\{x\}+(1-t)\{y\}}\right).$$

\noindent Then, $F$ is a harmonically $m$-convex set-valued function.
\end{proof}
   
In the following, consider that for nonempty linear space subsets $A,B,C,D$ and $\alpha$ a scalar, the following properties are true:

\begin{itemize}
	\item $\alpha(A\times B)= \alpha A\times \alpha B$,
	\item $(A\times C)+ (B\times D)= (A+B)\times (C+D)$,
	\item Si $A\subseteq B$ y $C\subseteq D$ then $A\times C\subseteq B\times D$.
\end{itemize} 

\begin{proposition}
	Let $F_{1}: D\rightarrow n(Y)$ and $F_{2}: D\rightarrow n(Z)$ harmonically $m$-convex set-valued functions. Then the cartesian product
	$F_{1}\times F_{2}$ is a harmonically $m$-convex set-valued function.
\end{proposition}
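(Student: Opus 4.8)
The plan is to verify the defining inclusion \eqref{FCVAmC_3} for the cartesian product $F_1\times F_2$ directly, reducing everything to the inclusions already known for $F_1$ and $F_2$ together with the three displayed algebraic identities for products of linear-space subsets. First I would fix $x,y\in D$, $t\in[0,1]$ and $m\in(0,1]$ and write down what must be shown, namely
\[
t(F_1\times F_2)(y)+m(1-t)(F_1\times F_2)(x)\subseteq (F_1\times F_2)\!\left(\frac{mxy}{tmx+(1-t)y}\right).
\]
By the definition of the cartesian product set-valued function the left-hand side equals $t\bigl(F_1(y)\times F_2(y)\bigr)+m(1-t)\bigl(F_1(x)\times F_2(x)\bigr)$.

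Next I would apply the scalar-times-product rule $\alpha(A\times B)=\alpha A\times\alpha B$ to each of the two summands, turning the left-hand side into $\bigl(tF_1(y)\times tF_2(y)\bigr)+\bigl(m(1-t)F_1(x)\times m(1-t)F_2(x)\bigr)$, and then the product-of-sums rule $(A\times C)+(B\times D)=(A+B)\times(C+D)$ to collapse this to
\[
\bigl(tF_1(y)+m(1-t)F_1(x)\bigr)\times\bigl(tF_2(y)+m(1-t)F_2(x)\bigr).
\]
Now harmonic $m$-convexity of $F_1$ and of $F_2$ gives $tF_i(y)+m(1-t)F_i(x)\subseteq F_i\!\left(\frac{mxy}{tmx+(1-t)y}\right)$ for $i=1,2$, and the monotonicity rule ($A\subseteq B$, $C\subseteq D\Rightarrow A\times C\subseteq B\times D$) lets me pass to the product, yielding containment in $F_1\!\left(\frac{mxy}{tmx+(1-t)y}\right)\times F_2\!\left(\frac{mxy}{tmx+(1-t)y}\right)$, which by definition is exactly $(F_1\times F_2)\!\left(\frac{mxy}{tmx+(1-t)y}\right)$. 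It should also be noted at the outset that $D$ is simultaneously a harmonically $m$-convex subset for both $F_1$ and $F_2$, so $\frac{mxy}{tmx+(1-t)y}$ indeed lies in $D$ and all the expressions above are defined.

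There is no real obstacle here: the proof is a formal chain of equalities followed by one monotone inclusion, and the only thing to be careful about is applying the three product identities in the correct order (scalar-into-product first, then sum-of-products, then monotonicity) and keeping the two coordinates aligned. If one wanted to avoid the product identities entirely, an alternative is the elementwise argument: a generic element of the left-hand side has the form $\bigl(tb_1+m(1-t)a_1,\,tb_2+m(1-t)a_2\bigr)$ with $a_i\in F_i(x)$, $b_i\in F_i(y)$, and each coordinate lies in $F_i\!\left(\frac{mxy}{tmx+(1-t)y}\right)$ by $m$-convexity of $F_i$; but the identity-based computation is cleaner and matches the style already set up in the excerpt, so that is the route I would present.
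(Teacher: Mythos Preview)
Your proposal is correct and follows essentially the same chain of equalities and inclusions as the paper's own proof: expand $(F_1\times F_2)$, push the scalars into the products, collapse the sum of products into a product of sums, apply the harmonic $m$-convexity of each $F_i$, and use monotonicity of $\times$. If anything, your write-up is slightly more careful than the paper's (you record $m\in(0,1]$ and the fact that $\frac{mxy}{tmx+(1-t)y}\in D$), and your side remark about the elementwise alternative is a valid but unnecessary detour.
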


\begin{proof}
Let $x,y\in D$ and $t\in [0,1]$, then:

\begin{eqnarray*}
	t(F_{1}\times F_{2})(y)&+& m(1-t)(F_{1}\times F_{2})(x)\\
	&&= [tF_{1}(y)\times tF_{2}(y)]+[m(1-t)F_{1}(x)\times m(1-t)F_{2}(x)]\\
	&&=(tF_{1}(y)+m(t-1)F_{1}(x))\times (tF_{2}(y)+m(t-1)F_{2}(x))\\
	&&\subseteq F_{1}\left(\frac{mxy}{tmx+(1-t)y}\right)\times F_{2}\left(\frac{mxy}{tmx+(1-t)y}\right)\\
	&&=(F_{1}\times F_{2})\left(\frac{mxy}{tmx+(1-t)y}\right).
\end{eqnarray*}

\end{proof}

The following proposition establishes that the harmonically $m$-convex set-valued functions are closed under the sum and the product by a scalar.

\begin{proposition}
	Let $X,Y$ be two linear spaces. If $D$ is a harmonically $m$-convex subset of $X$ and $F,G: D\subset X\rightarrow n(Y)$ two harmonically $m$-convex set-valued functions. Then $\lambda F+ G$ is a harmonically $m$-convex set-valued function, for all $\lambda$.
\end{proposition}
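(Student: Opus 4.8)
The plan is to verify the defining inclusion \eqref{FCVAmC_3} for the set-valued function $\lambda F+G$ directly from the definitions, reducing everything to the corresponding inclusions for $F$ and for $G$ by means of elementary Minkowski-arithmetic identities on subsets of $Y$. First I would fix $x,y\in D$, $t\in[0,1]$, $m\in(0,1]$, and set $u=\dfrac{mxy}{tmx+(1-t)y}$; since $D$ is harmonically $m$-convex, $u\in D$, so $F(u)$ and $G(u)$ are defined, and hence $(\lambda F+G)(u)=\lambda F(u)+G(u)$ makes sense.

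Next, unwinding the definition of the sum of two set-valued functions and using that for subsets of a linear space one has $s(A+B)=sA+sB$ for any scalar $s$, $s(\lambda A)=\lambda(sA)$, and $\lambda A+\lambda B=\lambda(A+B)$, together with commutativity and associativity of the Minkowski sum, I would rewrite
$$t(\lambda F+G)(y)+m(1-t)(\lambda F+G)(x)=\big[\lambda\big(tF(y)+m(1-t)F(x)\big)\big]+\big[tG(y)+m(1-t)G(x)\big].$$
Then I would invoke the harmonic $m$-convexity of $F$ and of $G$ to obtain $tF(y)+m(1-t)F(x)\subseteq F(u)$ and $tG(y)+m(1-t)G(x)\subseteq G(u)$, and finish by applying the monotonicity rules $A\subseteq B\Rightarrow\lambda A\subseteq\lambda B$ and $(A\subseteq A',\,B\subseteq B')\Rightarrow A+B\subseteq A'+B'$; this bounds the right-hand side above by $\lambda F(u)+G(u)=(\lambda F+G)(u)$, which is exactly the inclusion \eqref{FCVAmC_3} for $\lambda F+G$.

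Finally, I would remark that no case distinction on the sign of $\lambda$ is needed, since the identities $\lambda(A+B)=\lambda A+\lambda B$ and $A\subseteq B\Rightarrow\lambda A\subseteq\lambda B$ hold for every scalar $\lambda$ (for $\lambda=0$ the assertion merely reduces to the harmonic $m$-convexity of $G$). There is no genuine obstacle here: the only point that must not be overlooked is that the hypothesis that $D$ is harmonically $m$-convex is precisely what guarantees that the argument $u$ lies in the common domain $D$, so that all the sets appearing are legitimate; everything else is the routine set algebra listed above.
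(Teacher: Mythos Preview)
Your proposal is correct and follows essentially the same route as the paper: expand $t(\lambda F+G)(y)+m(1-t)(\lambda F+G)(x)$ using the Minkowski-sum identities, group the $F$-terms and the $G$-terms, apply the harmonic $m$-convexity of $F$ and of $G$ separately, and use monotonicity of scalar multiplication and addition to pass to $(\lambda F+G)(u)$. Your write-up is in fact slightly more careful than the paper's in that you explicitly note why $u\in D$ and list the set-algebra rules being invoked, but the underlying argument is identical.
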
 

\begin{proof}
Let $x,y\in D\subset X$, $t\in [0,1]$ and $m\in (0,1]$. Since $F$ and $G$
are harmonically $m$-convex set-valued functions, we have:

$$tF(y)+m(1-t)F(x)\subseteq F\left(\frac{mxy}{tmx+(1-t)y}\right),$$
and,
$$tG(y)+m(1-t)G(x)\subseteq G\left(\frac{mxy}{tmx+(1-t)y}\right).$$

\noindent Thus,

\begin{eqnarray*}
	t(\lambda F+ G)(y)&+& m(1-t)(\lambda F+ G)(x)\\
	&&= [t(\lambda F(y))+m(1-t)(\lambda F(x))]+[tG(y)+m(1-t)G(x)]\\
	&& \subseteq\lambda F\left(\frac{mxy}{tmx+(1-t)y}\right)+ G\left(\frac{mxy}{tmx+(1-t)y}\right)\\
	&&= (\lambda F+G)\left(\frac{mxy}{tmx+(1-t)y}\right).
\end{eqnarray*}
\end{proof}

\begin{proposition}
	Let $x,\, Y$ be two linear spaces. If $D$ is a harmonically $m$-convex subset of $X$ and $F,G: D\subset X\rightarrow n(Y)$ two harmonically $m$-convex set-valued functions then $(F\cdot G)(x)$ is also harmonically $m$-convex set-valued function.
\end{proposition}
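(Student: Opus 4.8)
The plan is to follow the template of the preceding propositions on sums and cartesian products. I interpret the product pointwise, $(F\cdot G)(x)=F(x)\cdot G(x)=\{uv:u\in F(x),\,v\in G(x)\}$, which tacitly requires $Y$ to carry a multiplication; I would state this hypothesis explicitly at the outset. Fixing $x,y\in D$, $t\in[0,1]$, $m\in(0,1]$ and writing $z=\dfrac{mxy}{tmx+(1-t)y}$, the goal is the inclusion
$$t(F\cdot G)(y)+m(1-t)(F\cdot G)(x)\subseteq(F\cdot G)(z).$$

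First I would pick an arbitrary element of the left-hand side, which has the form $w=t\,pq+m(1-t)\,p'q'$ with $p\in F(y)$, $q\in G(y)$, $p'\in F(x)$, $q'\in G(x)$. The harmonic $m$-convexity of the factors supplies $tp+m(1-t)p'\in F(z)$ and $tq+m(1-t)q'\in G(z)$, so the single product $\bigl(tp+m(1-t)p'\bigr)\bigl(tq+m(1-t)q'\bigr)$ belongs to $(F\cdot G)(z)$ by definition. The natural attempt is to identify this product with $w$.

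The step I expect to be the main obstacle is precisely this identification, and I do not expect it to go through. Expanding the product gives
$$t^{2}pq+mt(1-t)\bigl(pq'+p'q\bigr)+m^{2}(1-t)^{2}p'q',$$
whereas $w=t\,pq+m(1-t)\,p'q'$. The squared coefficients $t^{2}$ and $m^{2}(1-t)^{2}$ do not coincide with $t$ and $m(1-t)$, and the two cross terms $pq'$, $p'q$ have no analogue in $w$. Thus convexity of $F$ and $G$ alone does not yield the required inclusion; this mirrors the real-valued situation, where $f,g$ harmonically $m$-convex does not make $fg$ harmonically $m$-convex.

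I therefore expect that the proposition cannot be completed as worded and that the argument must rely on extra structural hypotheses. The cleanest route is to specialise to $Y=\R$ with $F,G$ nonnegative and interval-valued, $F=[f_{1},f_{2}]$, $G=[g_{1},g_{2}]$, and to assume the endpoint functions are similarly ordered; one then controls $(F\cdot G)(z)$ by bounding the endpoints $f_{1}g_{1}$ and $f_{2}g_{2}$ at $z$, absorbing the cross terms and dominating the squared coefficients by $t$ and $m(1-t)$ on $[0,1]$. Absent such hypotheses I would flag the statement as requiring correction rather than search for a more elaborate manipulation of the general set product.
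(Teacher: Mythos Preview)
Your diagnosis of the algebraic obstacle is exactly right, and in fact coincides with the paper's own computation: expanding the product of the two convex combinations produces the cross terms $pq'$, $p'q$ and the squared weights $t^{2}$, $m^{2}(1-t)^{2}$. Where your proposal diverges from the paper is in the conclusion you draw. You stop and declare the statement unprovable without extra hypotheses; the paper instead supplies two devices and pushes through.

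The first device is a Chebyshev-type set inclusion, quoted from \cite{Santana2018} as
\[
F(x_{1})G(x_{2})+F(x_{2})G(x_{1})\subset F(x_{1})G(x_{1})+F(x_{2})G(x_{2}),
\]
which is precisely the ``similarly ordered'' ingredient you guessed would be needed. The paper treats this as an available fact rather than an additional hypothesis, and uses it to replace the cross terms by diagonal ones. The second device is the elementary bound $t+m(1-t)\le 1$ (valid because $m\le 1$), which turns the resulting coefficients $t[t+m(1-t)]$ and $m(1-t)[t+m(1-t)]$ back into $t$ and $m(1-t)$. The paper also organises the argument as a chain of inclusions starting from $(F\cdot G)(z)$ and descending to $t(F\cdot G)(x_{1})+m(1-t)(F\cdot G)(x_{2})$, rather than element-chasing from the left-hand side as you do; working at the level of sets is what allows the Chebyshev inclusion to be inserted cleanly. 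So the missing idea in your proposal is not a different computation but the willingness to invoke that auxiliary inclusion and the coefficient bound; had you carried your ``similarly ordered'' remark through at the set level you would have reproduced the paper's argument. (Whether that quoted inclusion really holds in the generality claimed, and whether each $\supseteq$ in the chain is oriented correctly, are fair questions---but they are questions about the paper's proof, not about the gap in yours.)
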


\begin{proof}
	First, from \cite{Santana2018}, we have that.
	
	\[F(x_1)G(x_2)+F(x_2)G(x_1)\subset F(x_1)G(x_1)+F(x_2)G(x_2).\]
	
	Then, for $t\in[0,1]$ and $x_1,\,x_2\in D$:
	
	\small{\begin{eqnarray*}
		(F\cdot G)\left(\frac{mx_1x_2}{mtx_1+(1-t)x_2}\right)&=& F\left(\frac{mx_1x_2}{mtx_1+(1-t)x_2}\right)\cdot G\left(\frac{mx_1x_2}{mtx_1+(1-t)x_2}\right)\\
		&\supseteq&[tF(x_1)+m(1-t)F(x_2)][tG(x_1)+m(1-t)G(x_2)]\\
		&=& t^2F(x_1)G(x_1)+mt(1-t)F(x_1)G(x_2)+mt(1-t)F(x_2)G(x_1)\\
		&&+m^2(1-t)^2F(x_2)G(x_2)\\
		&=& t^2F(x_1)G(x_1)+mt(1-t)[F(x_1)G(x_2)+F(x_2)G(x_1)]\\
		&& m^2(1-t)^2F(x_2)G(x_2)\\
		&\supseteq& t^2F(x_1)G(x_1)+mt(1-t)[F(x_1)G(x_1)+F(x_2)G(x_2)]\\
		&&+m^2(1-t)^2F(x_2)G(x_2)\\
		&=& t[t+m(1-t)]F(x_1)G(x_1)+[m(1-t)[t+m(1-t)]F(x_2)G(x_2)]\\
		&\supseteq& tF(x_1)G(x_1)+m(1-t)F(x_2)G(x_2).
	\end{eqnarray*}}
	
	This shows that the product of two harmonically $m$-convex set-valued functions is again harmonically $m$-convex set-valued function.
\end{proof}


The following result follows the idea of \.I. \.{I}scan in \cite{Iscan2016}. To integrate set-valued functions we use the definition given by R. J. Aumann, and  if a function satisfies the requirements of being integrable under this integral definition given, we say that a set-valued function $F$ is Aumann integrable under a certain domain (see \cite{Aumann1965}).

\begin{theorem}
	Let  $X,Y$ linear spaces, $D$ be a harmonically
	$m$-convex subset of $X$ and $F:D\subset X\rightarrow n(Y)$ a harmonically $m$-convex set-valued Aumann integrable function, then
	
	$$\min\left(\inf\left(\frac{F(a)+mF\left(\frac{b}{m}\right)}{2}\right),\inf\left(\frac{mF\left(\frac{a}{m}\right)+F(b)}{2}\right)\right)\subseteq \frac{ab}{b-a}\int_{a}^{b}\frac{F(x)}{x^2}dx.$$ 
\end{theorem}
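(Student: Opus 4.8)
The plan is to mimic the classical Hermite–Hadamard argument of \.I\.{}scan, translating each scalar inequality into the corresponding set inclusion (with the inclusion running in the opposite direction, as is forced by the set-valued convexity (\ref{FCVAmC_3})). First I would fix $0<a<b<\infty$ and parametrize the interval by the substitution $x=\dfrac{mab}{tmb+(1-t)a}$ for $t\in[0,1]$, the same change of variable that appears in the $m$-convex set definition; this sends $t=0$ to $x=b$ and $t=1$ to $x=a/m$ — so I would actually work with the harmonic parametrization $\dfrac{ab}{tb+(1-t)a}$ between $a$ and $b$ and then bring in the factor $m$ where the definition demands it. Under this substitution one computes $\dfrac{ab}{b-a}\int_a^b \dfrac{F(x)}{x^2}\,dx = \int_0^1 F\!\left(\dfrac{ab}{tb+(1-t)a}\right)dt$ in the Aumann sense, which is the identity that converts the weighted integral on the right-hand side into a plain integral over $[0,1]$.

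Next I would apply the defining inclusion (\ref{FCVAmC_3}) pointwise in $t$: writing the harmonic mean point appropriately, harmonic $m$-convexity gives
$$
tF(b)+m(1-t)F\!\left(\tfrac{a}{m}\right)\subseteq F\!\left(\tfrac{ab}{tb+(1-t)a}\right),
\qquad
tF(a)+m(1-t)F\!\left(\tfrac{b}{m}\right)\subseteq F\!\left(\tfrac{ab}{ta+(1-t)b}\right),
$$
the second by the symmetric choice $x\leftrightarrow y$. Integrating each inclusion over $t\in[0,1]$ (Aumann integration preserves inclusions and is additive), and using $\int_0^1 t\,dt=\int_0^1(1-t)\,dt=\tfrac12$ together with $\int_0^1 F\!\left(\tfrac{ab}{ta+(1-t)b}\right)dt=\int_0^1 F\!\left(\tfrac{ab}{tb+(1-t)a}\right)dt$, I obtain both
$$
\frac{F(a)+mF\!\left(\tfrac{b}{m}\right)}{2}\subseteq \frac{ab}{b-a}\int_a^b\frac{F(x)}{x^2}\,dx,
\qquad
\frac{mF\!\left(\tfrac{a}{m}\right)+F(b)}{2}\subseteq \frac{ab}{b-a}\int_a^b\frac{F(x)}{x^2}\,dx.
$$
Finally, since each of the two sets on the left is contained in the common set on the right, so is any selection from them; in particular the pointwise infimum of the two (interpreted as $\min$ of the two candidate sets, i.e.\ the set of componentwise-smallest selections) remains contained in the integral. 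Taking the infimum and then the $\min$ over the two options yields exactly the claimed inclusion.

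The main obstacle I expect is making the change of variables rigorous at the level of Aumann integrals rather than scalar integrals: one must check that $x\mapsto F(x)/x^2$ being Aumann integrable on $[a,b]$ implies $t\mapsto F\!\left(\tfrac{ab}{tb+(1-t)a}\right)$ is Aumann integrable on $[0,1]$, and that the substitution formula $\dfrac{ab}{b-a}\int_a^b \dfrac{F(x)}{x^2}\,dx=\int_0^1 F\!\left(\tfrac{ab}{tb+(1-t)a}\right)dt$ holds for the set-valued integral. This follows because Aumann's integral is defined through integrable selections and the scalar change of variables applies to each selection, but it should be stated carefully. The secondary subtlety is the precise meaning of $\inf$ and $\min$ for sets in $Y$ in the statement; I would interpret $\inf$ and $\min$ coordinatewise (or, if $Y=\R$, as the infimum of the Minkowski-averaged intervals), and then the inclusions above are exactly what is needed since passing to a subset on the left side of an inclusion is harmless. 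Everything else is the routine bookkeeping of splitting the integral and evaluating $\int_0^1 t\,dt$.
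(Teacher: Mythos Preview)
Your argument is essentially the paper's own proof: apply (\ref{FCVAmC_3}) with $x$ replaced by $x/m$ to get the two pointwise inclusions, integrate each over $t\in[0,1]$, and convert the right-hand side to $\frac{ab}{b-a}\int_a^b F(x)/x^2\,dx$ by performing the scalar change of variables on each Aumann selection. One cosmetic slip: in your two displayed inclusions the denominators $tb+(1-t)a$ and $ta+(1-t)b$ are interchanged relative to what the definition actually yields (taking $y=b$, $x=a/m$ gives $F\!\bigl(ab/(ta+(1-t)b)\bigr)$, and $y=a$, $x=b/m$ gives $F\!\bigl(ab/(tb+(1-t)a)\bigr)$), but since you immediately invoke the $t\mapsto 1-t$ symmetry of the integral this is harmless; the paper, incidentally, is no more explicit than you are about the meaning of $\min/\inf$ on sets.
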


\begin{proof}
Let $F:D\subset X\rightarrow n(Y)$ be a harmonically $m$-convex set-valued function, for every $x,y\in D$ we have to

$$tF(y)+m(1-t)F\left(\frac{x}{m}\right)\subset F\left(\frac{xy}{tx+(1-t)y}\right)=F\left(\frac{m\frac{x}{m}y}{tm\frac{x}{m}+(1-t)y}\right),$$

\noindent Then, we have the following:
\begin{equation}\label{32}
tF(b)+m(1-t)F\left(\frac{a}{m}\right)\subset F\left(\frac{ab}{ta+(1-t)b}\right),
\end{equation}

\noindent and
$$tF(a)+m(1-t)F\left(\frac{b}{m}\right)\subset F\left(\frac{ab}{tb+(1-t)a}\right),$$
\noindent for every  $t\in [0,1]$, $m\in (0,1]$ and $a,b\in D$. Integrating both sides of (\ref{32}) on $[0,1]$ with respect to $t$, we get that 

\begin{equation}\label{es}
  \int_{0}^{1}tF(b)+m(1-t)F\left(\frac{a}{m}\right)dt\subset \int_{0}^{1}F\left(\frac{ab}{ta+(1-t)b}\right)dt.
\end{equation}

\noindent Integrating the left side of (\ref{es}) we have:
$$\int_{0}^{1}tF(b)+m(1-t)F\left(\frac{a}{m}\right)dt= \frac{F(b)+mF\left(\frac{a}{m}\right)}{2}.$$

\noindent By the Aumann integral definition we get, that integral on the right hand of (\ref{es}) is defined as:

$$ \int_{0}^{1}F\left(\frac{ab}{ta+(1-t)b}\right)dt= \left\{ \int_{0}^{1}f\left(\frac{ab}{ta+(1-t)b}\right)dt: f(x)\in F(x)\wedge t\in [0,1]\right\}.$$

\noindent But,
$$\int_{0}^{1}f\left(\frac{ab}{ta+(1-t)b}\right)dt= \frac{ab}{b-a}\int_{a}^{b}\frac{f(x)}{x^2}dx,$$

\noindent then 
$$\left\{\frac{ab}{b-a}\int_{a}^{b}\frac{f(x)}{x^2}dx: f(x)\in F(x)\wedge x\in [a,b]\right\}= \frac{ab}{b-a}\int_{a}^{b}\frac{F(x)}{x^2}dx.$$

\noindent In consecuense:
$$\frac{F(b)+mF\left(\frac{a}{m}\right)}{2}\subseteq \frac{ab}{b-a}\int_{a}^{b}\frac{F(x)}{x^2}dx.$$

\noindent Similarly, we have that:
$$\frac{F(a)+mF\left(\frac{b}{m}\right)}{2}\subseteq \frac{ab}{b-a}\int_{a}^{b}\frac{F(x)}{x^2}dx,$$

\noindent so the required result is obtained. 
$$\min\left(\inf\left(\frac{F(a)+mF\left(\frac{b}{m}\right)}{2}\right),\inf\left(\frac{mF\left(\frac{a}{m}\right)+F(b)}{2}\right)\right)\subseteq \frac{ab}{b-a}\int_{a}^{b}\frac{F(x)}{x^2}dx.$$ 
\end{proof}

 \end{document}